\documentclass[11pt]{amsart}
\usepackage{amsfonts,amssymb,amsmath,amsthm}
\usepackage{url}
\usepackage{enumerate}
\usepackage[all]{xy}

\usepackage{amsmath,amsfonts,amsthm,url,color,amssymb}
\usepackage{graphicx}
\usepackage{algpseudocode, algorithm}
\usepackage{hyperref}
\usepackage{todonotes}

\usepackage[norefs,nocites]{refcheck}

\newcommand{\G}{\Gamma}
\newcommand{\F}{\mathbb F}
\newcommand{\C}{\mathbb C}

\newtheorem{theorem}{Theorem}[section]
\newtheorem{lemma}[theorem]{Lemma}

\newtheorem{proposition}[theorem]{Proposition}
\theoremstyle{definition}
\newtheorem{definition}[theorem]{Definition}

\newtheorem{problem}[theorem]{Problem}
\theoremstyle{remark}
\usepackage{amssymb}
\newtheorem{remark}[theorem]{Remark}
\usepackage{enumerate}

\author{Jean Godard}
\address{Departamento de Matem\'{a}tica, Universidade Federal de Minas Gerais, UFMG, Belo Horizonte, MG, 31270-901, Brazil}
\email{jgodard@ufmg.br}

\author{Lucas Reis}
\address{Departamento de Matem\'{a}tica, Universidade Federal de Minas Gerais, UFMG, Belo Horizonte, MG, 31270-901, Brazil}
\curraddr{}
\email{lucasreismat@gmail.com}

\keywords{graphs, quadratic forms, finite fields, clique number, character sums}
\subjclass[2020]{Primary 05E99, Secondary 05C25, 11T24}
\begin{document}

\title[Graphs from quadratic forms and vector spaces]{Graphs from quadratic forms and vector spaces over finite fields}

\maketitle

\begin{abstract}
Let $q$ be an odd prime power, let $n\ge 2$, and let $V\subsetneq \F_{q^n}$ be a proper $\F_q$-vector subspace. Given a nonzero quadratic form $Q(X,Y)\in \F_{q^n}[X,Y]$, we consider the graph $\G(Q,V)$ that naturally arises from the condition $Q(X,Y)\in V$. We determine all quadratic forms $Q$ for which $\G(Q,V)$ is undirected for every $V$. Besides the case $Q(X,Y)=XY$, studied earlier by the second author, this essentially leads to the forms $X^2\pm Y^2$ and the family $Q_b(X, Y):=X^2+bXY+Y^2, b\ne 0$. We then study connectedness and clique number for the corresponding graphs. Our results reveal a clear contrast between these cases. The graphs $\G(X^2\pm Y^2, V)$ are well structured, disconnected and their clique number can be as large as $\# V$. On the other hand, the family $Q_b$ seems to yield less structured graphs: the graphs are connected (in fact, of diameter $2$) if $\# V\ge q^{3n/4}$ and, in many cases, their clique number is $o(\# V)$. Our proofs are mainly based on character sums, while requiring a few algebraic and combinatorial ideas. We end the paper with some  open problems and remarks, including a short discussion of the complementary case where $q$ is even.
\end{abstract}


\section{Introduction} 
\label{intro}
Graphs defined from algebraic conditions have been extensively studied in several contexts. For instance, Cayley graphs~\cite{cayley1878} play a fundamental role in the interplay between group theory and combinatorics, being responsible for many extremal constructions in graph theory. For more information on Cayley graphs, see~\cite{babai1979, godsil2001, zhou2022}. We also have the zero-divisor graphs in ring theory, where these graphs reflect the multiplicative structure through the relation $XY=0$: see~\cite{akbari2006, anderson1999, anderson2011, beck1988} for more details.  

Let $q$ be a prime power and let $\F_q$ be the finite field with $q$ elements. In the finite field setting, a classical algebraic graph construction is the Paley graph, in which there is a directed edge from $a\in \F_q$ to $b\in \F_q$ if $a-b$ is a nonzero square of $\F_q$. Most of the literature is focused in the case $q\equiv 1\pmod 4$, for which the corresponding graph is undirected. More generally, one may replace squares by $d$-th powers, where $d$ is an integer satisfying $q\equiv 1\pmod {2d}$. Graphs of this kind have been studied by several authors, and questions on the clique number and relations with Waring's problem over finite fields have been discussed in ~\cite{cohen,paley, pod, csaba,yip2}. More recently, other graph contructions from finite fields have been explored: see~\cite{kim2026,mans2020, yip2025}.

The Paley-like graph constructions are based on the interaction between differences in $\F_q$ and multiplicative subgroups of $\F_q^*$. More recently, in~\cite{reis2} the author discusses the multiplicative-additive analogue of the latter. Specifically, given an additive subgroup $V$ of $\F_q$ (i.e., a vector subspace), one can consider the graphs $\Gamma(XY^{-1}, V)$ with vertex set $\F_q^*$ and directed edges $a\to b$ if $a\ne b$ and $ab^{-1}\in V$. However, if we want the graph to be undirected, this means that $V\setminus \{0\}$ is closed under the inversion map $z\mapsto z^{-1}$. According to~\cite{bence}, up to a family of exceptions, the latter implies that $V$ is a subfield. When $V$ is a subfield, the graph $\Gamma(XY^{-1}, V)$ has a very simple structure, hence not interesting to explore. In particular, in~\cite{reis2} the author replaces the condition $XY^{-1}\in V$ by $XY\in V$, making the corresponding graph undirected regardless the vector subspace $V$. The paper is mainly focused on the study of cliques of this graph. In particular, a nice description of maximal cliques is given, along with bounds for the clique number of the graph. What reveals is that the clique number for many $V$'s can be bounded nontrivially using a beautiful sum-product estimate from Bourgain-Glibichuck~\cite{bourgain}. Moreover, the problem of computing the largest possible clique number obtained from a proper subspace $V\subsetneq \F_q$ translates into a combinatorial problem involving the vanishing of certain bilinear forms~\cite{ah}. 

Motivated by the $XY$ construction, in this paper we explore a more general setting over fields of odd characteristic, replacing the expression $XY$ by a generic quadratic form. 

\begin{definition}
    Given a nonzero quadratic form $Q(X, Y)=aX^2+bXY+cY^2\in \F_{q^n}[X, Y]$ and an $\F_q$-vector subspace $V\subseteq \F_{q^n}$, let $\G(Q, V)$ be the directed graph with vertex set $\F_{q^n}$ and directed edges $x\to y$ if $x\ne y$ and $Q(x, y)\in V$. 
\end{definition}
We say that $\G(Q, V)$  is {\em undirected} if, for every $x, y\in \F_{q^n}$, both or none of the directed edges $\{x\to y, y\to x\}$ are in $\G(Q, V)$. If $\G(Q, V)$ is undirected, the edges are denoted by $(x, y)$ and $\omega(Q, V)$ is the clique number of $\G(Q, V)$. Here we are mostly interested in discussing the clique number and conectedness of such graphs. In particular, we will only consider quadratic forms $Q$ for which $\G(Q, V)$ is always undirected, regardless the subspace $V$. Our first result provides a complete description of such $Q$'s.

\begin{theorem}\label{thm:first}
    Let $q$ be an odd prime power and let $Q(X, Y)=aX^2+bXY+cY^2\in \F_{q^n}[X, Y]$ be a nonzero quadratic form. Then $\G(Q, V)$ is undirected for every $\F_q$-vector subspace $V\subseteq \F_{q^n}$ if and only if $a=c$ or $b=0$ and $a=-c\ne 0$. 
\end{theorem}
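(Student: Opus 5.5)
The plan is to handle the two directions separately, with the converse carrying all the work.

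For sufficiency, we compute $Q(x,y)-Q(y,x)=(a-c)(x^2-y^2)$. If $a=c$, then $Q$ is symmetric, so $Q(x,y)\in V$ if and only if $Q(y,x)\in V$. If $b=0$ and $a=-c\neq 0$, then $Q(y,x)=a(y^2-x^2)=-Q(x,y)$. Since $V$ is closed under negation, the condition is again symmetric. In both cases $\G(Q,V)$ is undirected for every $V$.

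For necessity, we assume $a\neq c$ and show that undirectedness for all $V$ forces $b=0$ and $a=-c$. The key device is to use hyperplanes, and more precisely the extreme choice of $V$ an $\F_q$-hyperplane of $\F_{q^n}$. This is proper because $n\ge 2$, and there are enough of them. We could also use one-dimensional subspaces $V=\F_q\theta$.

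With lines, undirectedness means the following: for all $x\neq y$, if $Q(x,y)\in\F_q\theta$ then $Q(y,x)\in\F_q\theta$. Choosing $\theta=Q(x,y)$ whenever this is nonzero, we get that $Q(y,x)\in \F_q Q(x,y)$ for all $x\neq y$. When $Q(x,y)=0$, taking $V=\{0\}$ gives $Q(y,x)=0$. So for all $x\neq y$ we have $Q(y,x)=\lambda(x,y)Q(x,y)$ with $\lambda(x,y)\in\F_q$.

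Now specialize to $y=1$ and $x=t$ ranging over $\F_{q^n}\setminus\{1\}$. The two rational functions $f(t)=ct^2+bt+a$ and $g(t)=at^2+bt+c$ then satisfy $f(t)=\lambda(t)g(t)$ with $\lambda(t)\in\F_q$. Consequently the rational function $f/g$ takes values only in $\F_q\cup\{\infty\}$ on more than $q^n-1-2$ points. A nonconstant rational function of degree at most $2$ takes each value at most $2$ times, so it takes at most $2(q+1)$ points into $\F_q\cup\{\infty\}$. Since $q^n-3>2(q+1)$ in most cases, $f/g$ must be a constant $\lambda$, or $g=0$. The case $g=0$ is impossible because $Q\neq 0$. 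So $f=\lambda g$ with $\lambda\in\F_q^*$, which gives $c=\lambda a$, $b=\lambda b$, $a=\lambda c$. Then $\lambda^2=1$ provided $(a,c)\neq(0,0)$. Since $a\ne c$, $\lambda=1$ is excluded, so $\lambda=-1$, which yields $b=0$ and $c=-a$. Here $a\neq 0$, since otherwise $a=c=0$, contradicting $a\ne c$. If $(a,c)=(0,0)$, then $a=c$, which we excluded.

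The main obstacle is the counting step for small parameters such as $q^n=9$, where $q^n-3>2(q+1)$ may fail. There I would first try to avoid the counting altogether by a direct choice of points. Take $x=t$ and $y=-t$: then $Q(t,-t)=(a-b+c)t^2$ is symmetric, so it gives nothing. Instead take $x=0$ and $y\neq 0$: this gives $cy^2\in\F_q\,ay^2$ and vice versa. Take $x=1$ and $y=\alpha$ with $\alpha$ generating $\F_{q^n}$, and use $\F_q$-linear independence of $1,\alpha,\alpha^2$ when $n\ge3$. When $n=2$, choose $\alpha$ with $\alpha^2\in\F_q$ (possible since $q$ is odd), so that $f(\alpha)=(c\alpha^2+a)+b\alpha$ and $g(\alpha)$ decompose over the basis $\{1,\alpha\}$. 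Comparing coordinates directly should force $\lambda=\pm1$ and $b=0$ when $\lambda=-1$. If this gets messy, I would handle the tiny cases by the degree argument applied over several pairs $(x,y)$ instead.
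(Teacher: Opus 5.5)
Your route is the paper's. One-dimensional subspaces $\F_q Q(x,y)$, together with $\{0\}$, give $Q(y,x)=\lambda Q(x,y)$ with $\lambda\in\F_q$. Fixing one variable reduces this to the degree-$\le 2$ polynomials $f-\lambda g$, one of which must vanish identically. Then the system $c=\lambda a$, $b=\lambda b$, $a=\lambda c$ is solved. Your sufficiency direction and final case analysis are correct; the mention of hyperplanes plays no role, since only lines are used.

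The gap is in the counting step. You discard the common zeros of $f$ and $g$ and allow the value $\infty$, so you only reach a contradiction when $q^n-3>2(q+1)$. This fails exactly when $q^n=9$, and your proposed patches for that case do not work. Comparing coordinates with respect to $\{1,\alpha,\alpha^2\}$ or $\{1,\alpha\}$ is illegitimate because $a,b,c$ lie in $\F_{q^n}$, not in $\F_q$. An identity such as $a+b\alpha+c\alpha^2=\lambda(c+b\alpha+a\alpha^2)$ therefore cannot be split coordinatewise. The fallback of ``the degree argument over several pairs'' is not yet an argument.

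The remedy is a sharper count, which is exactly the paper's pigeonhole step. Every $t\neq 1$ is a root of $f-\lambda(t)g$ for some $\lambda(t)\in\F_q$. At a common zero of $f$ and $g$ any $\lambda$ works, so nothing needs to be discarded. Suppose none of the $q$ polynomials $f-\lambda g$, $\lambda\in\F_q$, vanished identically. Their root sets would then contain at most $2q$ points in total. Yet they must cover the $q^n-1\ge q^2-1>2q$ elements $t\neq 1$, using $q\ge 3$. Equivalently, some $\lambda_*$ serves at least $\lceil (q^n-1)/q\rceil=q^{n-1}\ge 3$ values of $t$, which forces $f\equiv\lambda_* g$. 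This covers $q^n=9$ (where $8>6$) along with all other cases, and the rest of your proof then goes through unchanged.
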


We observe that if $\lambda \in \F_{q^n}^*$, then $\Gamma(Q, V)\cong \Gamma(\lambda Q, \lambda V)$, where $\lambda V$ is again an $\F_q$-vector subspace. In particular, it suffices to explore the graphs by considering the quadratic forms up to a multiplication by a nonzero scalar. Combining this observation with Theorem~\ref{thm:first}, we end up with four types of quadratic forms:
\begin{itemize}
\item $Q_*(X, Y):=XY$;
    \item $Q_+(X, Y):=X^2+Y^2$;
    \item $Q_-(X, Y):=X^2-Y^2$;
    \item $Q_b(X, Y):=X^2+bXY+Y^2$ with $b\in \F_{q^n}^*$. 
\end{itemize}
The quadratic form $Q_*(X, Y)$ was earlier explored in~\cite{reis2}; in this case the graph is always connected as there is an edge between $0$ and any other element of $\F_{q^n}$. So we concentrate our study on the remaining three quadratic forms above. 
For these quadratic forms, we observe that for each $\alpha\in \F_{q^n}$ and $v\in V$, the equation $Q(\alpha, y)=v$ has at most $2$ solutions. In particular, the degree of every vertex in $\G(Q, V)$ is at most $2\# V$, yielding the trivial bound 
$$\omega(Q, V)\le 2\# V.$$ 
The case $V=\F_{q^n}$ yields the complete graph on $q^n$ vertices, regardless the quadratic form $Q$ so we only consider proper vector subspaces.
Our main result provides information on the connectedness and clique number of the graphs $\G(Q, V)$ and it can be read as follows.

\begin{theorem}\label{thm:main}
Let $q$ be an odd prime power and let $V\subsetneq \F_{q^n}$ be a proper $\F_q$-vector subspace, where $n\ge 2$. Moreover, for each $u\in \F_{q^n}$, let $N(u, V)$ be the number of elements $z\in \F_{q^n}$ such that $z^2\in u+V:=\{u+v:v\in V\}$. Then the following hold:

\begin{enumerate}[(i)]
\item For $Q=Q_+, Q_-$, the graph $\Gamma(Q, V)$ is disconnected. Moreover, if $\# V>q^{n/2}$, then $\Gamma(Q_-, V)$ has exactly $\frac{q^n}{\# V}$ connected components.

\item $\omega(Q_+, V)=2$ if $N(0, V)=1$. Otherwise,  $$\omega(Q_+, V)=N(0, V).$$
In particular, if $n$ is odd, then $\omega(Q_+, V)=\# V$. 
    
    \item $\omega(Q_-, V)= \max\{N(u^2, V): u\in \F_{q^n}\}$. In particular, if $n$ is odd, then $\omega(Q_-, V)\ge N(0, V)=\# V$.

\item For every $b\in \F_{q^n}^*$, $\omega(Q_b, V)\le q^{n/2}+2$.
\end{enumerate}  
In particular, for $Q=Q_{\pm}$, we have 
\begin{equation}\label{eq:near}
    |\omega(Q, V)-\# V|\le q^{n/2}.
\end{equation}
\end{theorem}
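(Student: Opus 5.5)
For $Q_-$ and $Q_+$ the plan is to describe the graph exactly, and to reduce clique numbers and components to the counts $N(u,V)$. For $Q_b$ a single additive character should give the bound. For $Q_-$, the relation $x^2-y^2\in V$ says exactly that $x^2$ and $y^2$ lie in the same coset of $V$. So $\G(Q_-,V)$ is a disjoint union of complete graphs, one on each nonempty fibre $\{z: z^2\in u+V\}$. The connected components are these fibres, and the clique number is the largest fibre size, i.e. $\max_u N(u^2,V)$, which is (iii).

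For (i) with $Q_-$, disconnectedness follows if at least two fibres are nonempty. If only one were nonempty, it would be the fibre of $0$, so $V$ would contain all squares. Since $q$ is odd, every element is a difference of two squares, so the additive span of the squares is all of $\F_{q^n}$, contradicting $V\ne\F_{q^n}$.

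For the component count I will use the character-sum formula
$$N(u,V)=\frac{\#V}{q^n}\sum_{\psi\in V^{\perp}}\overline{\psi(u)}\sum_{z\in\F_{q^n}}\psi(z^2),$$
where $V^\perp$ is the group of additive characters trivial on $V$. Every nontrivial inner sum is a quadratic Gauss sum of modulus $q^{n/2}$, so $|N(u,V)-\#V|\le q^{n/2}(1-\#V/q^n)<q^{n/2}$. Hence when $\#V>q^{n/2}$ every coset $u+V$ contains a square, and there are exactly $q^n/\#V$ components.

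I also want $N(0,V)=\#V$ for $n$ odd. Write the nontrivial characters as $\psi_c(z)=\psi_1(\mathrm{Tr}(cz))$; these are trivial on $V$ exactly when $c$ lies in the trace-dual of $V$, an $\F_q$-subspace. The Gauss sum for $\psi_c$ equals $\eta(c)G$, where $\eta$ is the quadratic character of $\F_{q^n}$ and $G$ is a fixed Gauss sum. For $n$ odd, $\eta$ restricted to $\F_q^*$ is the nontrivial quadratic character of $\F_q$. So the contributions along each $\F_q^*$-line of nonzero $c$'s cancel, and for $u=0$ only the trivial character survives.

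For $Q_+$, the condition is $x^2\equiv -y^2 \pmod V$. Let $x^2\in u+V$. If $u\notin V$, then $x$ can only be adjacent to vertices in the fibre of $-u$, which is a different fibre. So every edge either lies inside the fibre of $0$ (where all pairs are adjacent) or joins the fibres of $u$ and $-u$ bipartitely. Hence cliques outside the $0$-fibre have size at most $2$, and $\omega(Q_+,V)=\max\{N(0,V),2\}$. Disconnectedness follows as before: the fibre of $0$ together with the other fibres can't all be linked, since the fibres come in closed pairs $\{u,-u\}$ and there are at least two distinct pairs.

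To show that $\omega=2$ is actually attained when $N(0,V)=1$, I need an edge to exist. If $n$ is even, then $-1=i^2$ is a square and $(x,ix)$ is an edge for any $x\ne0$. If $n$ is odd, then $N(0,V)=\#V\ge2$ as soon as $V\ne\{0\}$. The degenerate case $V=\{0\}$ needs a separate word.

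For (iv), fix a nontrivial character $\psi$ trivial on $V$; one exists since $V$ is proper. For a clique $C$, $\psi(Q_b(x,y))=1$ whenever $x\ne y$ in $C$. Factor
$$\psi(Q_b(x,y))=\psi(x^2)\,\psi(y^2)\,\psi(bxy).$$
The last factor is a bilinear character, so the standard Cauchy--Schwarz/orthogonality argument gives
$$\Bigl|\sum_{x,y\in C}\psi(Q_b(x,y))\Bigr|\le \#C\, q^{n/2}.$$
Comparing with the left side, which equals $\#C(\#C-1)+\sum_{x\in C}\psi((2+b)x^2)$ and so has absolute value at least $\#C^2-2\#C$, gives $\#C\le q^{n/2}+2$.

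Finally, \eqref{eq:near} follows by combining $|N(u,V)-\#V|<q^{n/2}$ with (ii) and (iii); the cases where $\omega=2$ need only a trivial check. I expect the main care to be needed in the Gauss sum sign analysis for $N(0,V)$ with $n$ odd, and in the small degenerate cases of (ii).
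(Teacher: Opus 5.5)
Your proposal is correct, apart from the degenerate case $V=\{0\}$ discussed below, where the statement itself is at fault. In several places it takes a different route from the paper.

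\textbf{Comparison with the paper.} Your treatment of $Q_-$ (classes of $x^2 \bmod V$) and of (iv) matches the paper. The one difference in (iv) is that you prove the bilinear bound $\bigl|\sum_{x,y\in C}\psi(Q_b(x,y))\bigr|\le \#C\,q^{n/2}$ directly, from the factorization $\psi(x^2)\psi(y^2)\psi(bxy)$ and Cauchy--Schwarz, instead of quoting Gyarmati--S\'ark\"ozy (Lemma~\ref{lem:GS}). Elsewhere you diverge:
\begin{itemize}
\item For $|N(u,V)-\#V|<q^{n/2}$, which drives both the component count in (i) and \eqref{eq:near}, you expand the indicator of $u+V$ in additive characters and use $|\sum_z\psi(z^2)|=q^{n/2}$. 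The paper instead bounds the multiplicative sum $\sum_{v\in V}\eta(u+v)$ by Lemma~\ref{lem:boundaffine}. Your version is self-contained and marginally sharper.
\item For $N(0,V)=\#V$ with $n$ odd, your cancellation along $\F_q^*$-lines of the dual space is the Fourier-side version of Lemma~\ref{lem:square}. The paper simply notes that multiplying by a nonsquare $\theta\in\F_q^*$ swaps the nonzero squares and nonsquares of $V$. That argument is shorter, and in yours no sign analysis of the Gauss sum is actually needed, since only the cancellation matters.
\item For $Q_+$ you pair the fibres over $u+V$ and $-u+V$. This gives the complete structure: a clique on $C_V$ plus complete bipartite graphs between paired fibres. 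The paper instead forces every triangle into $C_V$ via $2x_1^2=(x_1^2+x_2^2)+(x_1^2+x_3^2)-(x_2^2+x_3^2)$.
\item You get disconnectedness from the fact that squares span $\F_{q^n}$, whereas the paper uses the size bound $2\#V<q^n$.
\end{itemize}

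\textbf{The degenerate case.} Your explicit edge $(x,ix)$ for $n$ even fills a step the paper skips: Proposition~\ref{prop:Q+} only proves triangle-freeness when $S_V=\{0\}$ and never produces an edge. However, the case $V=\{0\}$ with $n$ odd cannot be handled by ``a separate word'', because there (ii) contradicts itself.
\begin{itemize}
\item Since $N(0,\{0\})=1$, the first sentence of (ii) asserts $\omega(Q_+,\{0\})=2$.
\item The final sentence of (ii) asserts $\omega(Q_+,\{0\})=\#V=1$.
\item The edges are the pairs with $x^2=-y^2$, $x\ne y$. So the truth is $\omega=2$ if $q\equiv 1\pmod 4$, and $\omega=1$ (the graph is edgeless) if $q\equiv 3\pmod 4$.
\end{itemize}
The paper's proof misses this because it asserts $N(0,V)=\#V>1$ for $n$ odd. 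The right resolution is to exclude $V=\{0\}$ from (ii) when $n$ is odd; the other items and \eqref{eq:near} are unaffected.
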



 Theorem~\ref{thm:main} is obtained through a combination of ideas and tools. We prove that the graphs $\Gamma(R, V)$ with $R=Q_+, Q_-$ are well structured and, in particular, we provide a complete characterization of its connected components and maximal cliques. The latter yields a formula for the clique number by means of the number of squares in $V$ and its translates.  
    For the quadratic forms $Q_b$, the corresponding graphs seem to be less structured and we estimate their clique number via a character sum bound due to Gyamarti and Sárközy~\cite{GS}.

\begin{remark}\label{rem:optimal}
    The uniform bound in item (iv) in Theorem~\ref{thm:main} is essentially sharp. In fact, for $n=2k$ and  $U=\F_{q^{k}}$, the bound implies $\omega(Q_b, U)\le q^k+2$. On the other hand, as $U$ is a field, it is closed under addition and multiplication. Hence if
    $b\in \F_{q^k}$, then $Q(x, y)\in U$ for every $x, y\in \F_{q^k}$ and so the vertices from $U$ induce a clique in $\G(Q_b, U)$. In particular,    $\omega(Q_b, U)\ge q^k$. 
\end{remark}

     In constrast to the cases $Q=Q_+, Q_-$, Theorem~\ref{thm:main} entails that the clique number of $\G(Q_b, V)$ cannot be as large as $\# V$ when $V$ has large dimension. In fact, if $V$ has dimension larger than $n/2$, Eq.~\eqref{eq:near}  implies that $\frac{\omega(Q_b, V)}{\# V}=O(q^{-1/2})$. On the other hand, while the graphs $\G(Q_{\pm}, V)$ are always disconnected, in many cases the graph $\G(Q_b, V)$ is connected. In fact, we prove a much stronger result.

     \begin{theorem}\label{thm:diam}
         Let $q$ be an odd prime, let $n\ge 2$ be a positive integer and let $V\subsetneq \F_{q^n}$ be a proper $\F_q$-vector space. If  $\# V\ge q^{3n/4}$, then for every $b\in \F_{q^n}^*$ the graph $\G(Q_b, V)$ has diameter $2$. 
     \end{theorem}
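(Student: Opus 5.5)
Fix $b\in\F_{q^n}^*$ and distinct vertices $x,y\in\F_{q^n}$. We will show that they have a common neighbour, i.e.\ some $z\ne x,y$ with $Q_b(x,z)\in V$ and $Q_b(y,z)\in V$. Together with the fact that the graph is not complete (since $V$ is proper, some pair is non-adjacent), this gives diameter exactly $2$.

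Write $V$ as the kernel of the $\F_q$-linear map determined by its annihilator. Let $\psi$ be the canonical additive character of $\F_{q^n}$, and let $V^\perp=\{c\in\F_{q^n}:\operatorname{Tr}(cv)=0\ \forall v\in V\}$. Then $\#V^\perp=q^n/\#V$, and
$$\mathbf 1_V(w)=\frac{1}{\#V^\perp}\sum_{c\in V^\perp}\psi(cw).$$
Hence the number $M$ of $z\in\F_{q^n}$ with $Q_b(x,z),Q_b(y,z)\in V$ equals
$$M=\frac{1}{(\#V^\perp)^2}\sum_{c,d\in V^\perp}\sum_{z}\psi\big(c(x^2+bxz+z^2)+d(y^2+byz+z^2)\big).$$
The term $c=d=0$ contributes the main term $q^n(\#V)^2/q^{2n}=(\#V)^2/q^n$.

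For the other pairs, the inner sum is a quadratic character sum in $z$ with leading coefficient $c+d$. When $c+d\neq 0$, Weil (in fact the explicit Gauss sum evaluation) gives absolute value exactly $q^{n/2}$. So these terms contribute at most $q^{n/2}$ in total after averaging over the at most $(\#V^\perp)^2$ pairs.

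The main obstacle is the degenerate pairs $d=-c\ne 0$. There the inner sum becomes $\psi(c(x^2-y^2))\sum_z\psi(cb(x-y)z)$, which vanishes because $cb(x-y)\neq 0$ whenever $c\ne0$. So this case is harmless; the key input is $b\neq 0$ together with $x\ne y$, and this is exactly where $Q_\pm$ (with $b=0$) fails.

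We therefore get
$$M\ge \frac{(\#V)^2}{q^n}-q^{n/2}.$$
Finally we subtract the at most two excluded values $z\in\{x,y\}$. Then a common neighbour exists once $(\#V)^2/q^n>q^{n/2}+2$. With $\#V\ge q^{3n/4}$ the left side is at least $q^{n/2}$, so the estimate falls just short at the boundary. We handle this in one of two ways. We can sharpen the count: the sum over $c+d\ne0$ has exactly $(\#V^\perp)^2-\#V^\perp$ terms, which gives the error bound $q^{n/2}(1-1/\#V^\perp)$ and hence slack $q^{n/2}\#V/q^n\ge q^{n/4}\cdot q^{0}$. Alternatively, we can check directly that $z=x$ or $z=y$ is counted in $M$ only when $Q_b(x,x)$ or $Q_b(x,y)$ lies in $V$, and treat those cases separately.

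If adjacency of $x$ and $y$ already holds, distance $1$ suffices. Otherwise $Q_b(x,y)\notin V$, so $z=y$ is not counted and only $z=x$ (when $(2+b)x^2\in V$) needs to be removed. The sharpened slack $q^{n/2}\#V/q^n-1>0$ absorbs this, and we conclude $M-1>0$.
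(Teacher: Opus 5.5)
Your proposal is correct and is essentially the paper's argument: expand $\mathbf 1_V$ via the additive characters indexed by the dual space $V^\perp$ (the paper's $V_*$), take the main term from $(c,d)=(0,0)$, and bound the remaining quadratic sums by $q^{n/2}$, with the pairs $d=-c\neq 0$ vanishing because the coefficient $cb(x-y)$ is nonzero; your sharper count of pairs plays the role of the paper's factor $q^{2j}-1$ in making the inequality strict. One harmless slip: when $x,y$ are nonadjacent, $z=x$ is not counted either, since that would require $Q_b(y,x)=Q_b(x,y)\in V$, so no subtraction is needed. This is exactly why the paper can say every $z\in\F_{q^n}$ fails and conclude $\Delta=0$.
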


The proof of Theorem~\ref{thm:diam} is mainly based on the {\em character sum method}, a classical tool in Number Theory and Finite Fields for detecting special patterns with the help of exponential sum estimates. In particular, we express the indicator function for vector spaces in finite fields by means of additive characters and, with the help of Weil's bound, we prove that every two vertices of $\G(Q_b, V)$ are connected to a third vertex if $\# V\ge q^{3n/4}$. Surprisingly enough, if we employ the same method to prove something weaker like $\Gamma(Q_b, V)$ having diameter $\le s$ with $s>2$, we would essentially need $\# V$ to be much larger than $q^{3n/4}$. This suggests that our approach alone is not sufficient to break the barrier $3n/4$. On the other hand, in the following remark we show that $n/2$ is the natural threshold for our problem.  

\begin{remark}\label{rem:disc}
    Although Theorem~\ref{thm:diam} might not be sharp, we can find disconnected graphs emerging from high dimensional vector spaces. For instance, set $n=2k$ and let $\alpha$ be a nonsquare of $\F_{q^n}$. Since every element of $\F_{q^k}$ is a square, every nonzero element of $U=\alpha\F_{q^k}$ is a nonsquare. In particular if $(0, z)$ is an edge of $\Gamma(Q_b, U)$, we obtain $0\ne z^2=Q_b(0, z)\in U$, a contradiction. In particular, $0$ is an isolated vertex of the graph.
\end{remark}

The paper is organized as follows. In Section 2 we provide some technical machinery, including bounds for certain character sums. In Section 3 we provide the main properties of the graphs arising from the quadratic forms $Q_{\pm}$. In Section 4 we prove our results and in Section 5 we provide remarks and problems that arise from our research, including a short discussion on these graphs for fields of characteristic $2$.

\section{Preparation}
In this section we provide some technical results that are essential to prove our main theorems.

\begin{lemma}\label{lem:square}
Let $q$ be an odd prime power and let $n\ge 2$ be odd. if $V\subseteq \F_{q^n}$ is an $\F_q$-vector space, then there exist $\# V$ elements $y\in \F_{q^n}$ such that $y^2\in V$. 
\end{lemma}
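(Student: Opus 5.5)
The plan is to count the solutions of $y^2\in V$ by grouping them according to the value $v=y^2\in V$. The element $v=0$ contributes exactly one solution, namely $y=0$. Each nonzero square $v\in V$ contributes exactly two solutions $\pm y$, since $q$ is odd, and nonsquares contribute none. Writing $S$ for the number of nonzero squares of $\F_{q^n}$ lying in $V$, the total count is therefore $1+2S$. The statement thus reduces to showing that
$$S=\frac{\# V-1}{2},$$
that is, that exactly half of the nonzero elements of $V$ are squares in $\F_{q^n}$.

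To prove this we use the parity of $n$. Since $q$ is odd and $n$ is odd, an element $\lambda\in\F_q^*$ is a square in $\F_{q^n}$ if and only if it is a square in $\F_q$. Indeed, the quadratic character of $\F_{q^n}$ restricted to $\F_q$ is $\lambda\mapsto \lambda^{(q^n-1)/2}$. Writing $N(\lambda)=\lambda^{n}$ for the norm of $\lambda\in\F_q$, this equals
$$\bigl(\lambda^{(q-1)/2}\bigr)^{1+q+\cdots+q^{n-1}}=\bigl(\lambda^{(q-1)/2}\bigr)^{n},$$
using $\lambda^{q}=\lambda$. Since $n$ is odd, this is just $\lambda^{(q-1)/2}$. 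Now fix a nonsquare $\lambda\in\F_q^*$, which exists because $q$ is odd. By the observation above, $\lambda$ is also a nonsquare in $\F_{q^n}$.

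Multiplication by $\lambda$ maps $V\setminus\{0\}$ bijectively onto itself, because $V$ is an $\F_q$-vector space. Since $\lambda$ is a nonsquare, this bijection interchanges the nonzero squares and the nonsquares of $V$. Hence the two classes have equal size, namely $(\# V-1)/2$. The total count is then $1+2\cdot\frac{\# V-1}{2}=\# V$, as claimed. The only genuinely delicate step is the restriction argument showing that a nonsquare of $\F_q$ remains a nonsquare in $\F_{q^n}$. This is exactly where the oddness of $n$ enters, and it fails for even $n$, as in Remark~\ref{rem:disc}. The edge case $V=\{0\}$ is trivial, with one solution $y=0$.
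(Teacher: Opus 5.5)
Your proof is correct and takes essentially the same route as the paper. Both show that a nonsquare of $\F_q$ remains a nonsquare in $\F_{q^n}$ when $n$ is odd: the paper uses a generator $\theta$ of $\F_q^*$ and the oddness of $\frac{q^n-1}{q-1}$, while you use $\lambda^q=\lambda$ to reduce the exponent to $n$. Both then use multiplication by this element as a bijection between the nonzero squares and the nonsquares of $V$, which gives the count $1+2\cdot\frac{\# V-1}{2}=\# V$.
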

\begin{proof}
Recall that $\F_q^*$ is cyclic, hence there exists $\theta\in \F_q^*$ of multiplicative order $q-1$. In particular, $\theta^{(q-1)/2}=-1$. Since $q, n$ are odd, the same holds for $\frac{q^n-1}{q-1}$ and so 
$$\theta^{\frac{q^n-1}{2}}=(\theta^\frac{q-1}{2})^{\frac{q^n-1}{q-1}}=-1.$$
Therefore, $\theta$ is not a square in $\F_{q^n}$. Since $\theta\in \F_q$ and $V$ is an $\F_q$-vector subspace, we see that the map $z\mapsto \theta z$ fixes $0$ and provides a bijection between the set of nonzero squares and nonsquares of $V$. In particular, $V$ has exactly $\frac{\#V-1}{2}$ nonzero squares, each yielding $2$ elements $y$ with $y^2\in V$ and the result follows by adding the element $0=0^2\in V$. 
\end{proof}

\subsection{Characters and character sums}
We briefly recall some standard definitions and facts on characters. We skip proofs and  refer to Chapter 5 of~\cite{LN} for more details on characters over finite fields. If $G$ is a finite abelian group, a character of $G$ is a group homomorphism $\chi:G\to \C^*$, where $\C$ is the field of complex numbers. Since every element of $G$ has finite order, $\chi(g)$ is a complex root of unity for every $g\in G$, and so $|\chi(g)|=1$. The trivial character is the constant character $\chi(g)=1$ for every $g\in G$.

An additive character of $\F_q$ is a character of the additive group $(\F_q,+)$. It is known that, for $q=p^n$ with $p$ prime, the additive characters of $\F_q$ comprise the maps $\psi_a, a\in \F_q$ given by $$\psi_a(x)=\exp\left(\frac{2\pi i \mathrm{Tr}(ax)}{p}\right),$$ where $\mathrm{Tr}(x)=x^{p^{n-1}}+\cdots+x^p+x$ denotes the absolute trace map from $\F_q$ to $\F_p$. In particular, $\psi_0$ is the trivial character. The character $\psi_1(x)$ is called the {\em canonical} additive character of $\F_q$. In particular, $\psi_1(x)$ is not trivial and satisfies 
$\psi_a(x)=\psi_1(ax)$ for every $a, x\in \F_{q}$.

A multiplicative character of $\F_q$ is a simply a character of the multiplicative cyclic group $\F_q^*$. Throughout the paper, every multiplicative character $\chi$ is extended to $0\in \F_q$ by setting $\chi(0)=0\in \mathbb C$. In particular, the trivial multiplicative character $\chi_0$ satisfies $\chi_0(x)=1$ for $x\in \F_q^*$ and $\chi_0(0)=0$. When $q$ is odd, we denote by $\eta$ the quadratic character of $\F_q$, that is, $\eta(x)=1$ if $x$ is a nonzero square, $\eta(x)=-1$ if $x$ is a nonsquare, and $\eta(0)=0$.

\begin{remark}\label{rem:rest}
 If $V\subsetneq \F_q$ is an additive subgroup (i.e., a proper vector subspace), then there exists a nontrivial additive character $\psi_V$ of $\F_q$ such that $\psi_V(v)=1$ for every $v\in V$.  
\end{remark}

The following lemma provides a simple expression for the indicator function of vector subspaces by means of additive characters. Its proof is a straightforward application of the {\em Poisson Summation Formula} for  finite abelian groups (see Theorem 1 of Chapter 12 in~\cite{terras}).

\begin{lemma}\label{lem:indicator}
Let $V\subseteq \F_{q^n}$ be an $\F_q$-vector space of size $q^j$. Then there exists an $\F_q$-vector space $V_*\subseteq \F_{q^n}$ of size $q^{n-j}$ such that, for every $x\in \F_{q^n}$, we have
$$\sum_{u\in V_*}\psi_u(x)=\begin{cases}q^{n-j}&\text{if}\quad x\in V,\\ 0&\text{if}\quad x\not\in V.\end{cases}$$    
\end{lemma}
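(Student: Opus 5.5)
We will take $V_*$ to be the annihilator of $V$ with respect to the trace bilinear form, and reduce the statement to the orthogonality of characters on $V$. The plan is to use the pairing $\langle x, u\rangle := \mathrm{Tr}_{q^n/q}(xu)$, where $\mathrm{Tr}_{q^n/q}$ is the relative trace from $\F_{q^n}$ to $\F_q$. This is a nondegenerate symmetric $\F_q$-bilinear form on $\F_{q^n}$. Set
$$V_* := \{u\in \F_{q^n} : \mathrm{Tr}_{q^n/q}(uv)=0 \text{ for all } v\in V\}.$$
By nondegeneracy, $V_*$ is an $\F_q$-subspace of dimension $n-j$, so $\#V_*=q^{n-j}$. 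Since the absolute trace factors as $\mathrm{Tr}=\mathrm{Tr}_{q/p}\circ \mathrm{Tr}_{q^n/q}$, every $u\in V_*$ satisfies $\psi_u(v)=1$ for all $v\in V$. Moreover $(V_*)_*=V$, since $V\subseteq (V_*)_*$ and both spaces have dimension $j$.

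The key steps are as follows.

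First, suppose $x\in V$. Then $\mathrm{Tr}_{q^n/q}(ux)=0$ for every $u\in V_*$, so each term $\psi_u(x)$ equals $1$. The sum is therefore $\#V_*=q^{n-j}$.

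Second, suppose $x\notin V=(V_*)_*$. Then there is some $u_0\in V_*$ with $c:=\mathrm{Tr}_{q^n/q}(u_0x)\neq 0$. For each $\lambda\in\F_q$ we have $\lambda u_0\in V_*$ and $\mathrm{Tr}_{q^n/q}(\lambda u_0x)=\lambda c$, which runs over all of $\F_q$. Consequently the map $u\mapsto \psi_u(x)=\psi_1(ux)$ is a character of the additive group $V_*$. This character is nontrivial: otherwise $\mathrm{Tr}_{q/p}(\lambda c)=0$ for all $\lambda\in\F_q$, which contradicts the nondegeneracy of $\mathrm{Tr}_{q/p}$ on $\F_q$. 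By orthogonality, the sum of a nontrivial character over the finite group $V_*$ vanishes, since shifting the summation variable by an element where the character is not $1$ multiplies the sum by a number different from $1$. Hence the sum is $0$.

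The only point requiring care is the second step. One must use the relative trace rather than the absolute trace, so that $V_*$ is an $\F_q$-space, not merely an $\F_p$-space. One must also invoke the nondegeneracy of $\mathrm{Tr}_{q/p}$ to deduce that the character is nontrivial. Both facts are standard; see \cite{LN}. The identity $(V_*)_*=V$ follows from the dimension count.
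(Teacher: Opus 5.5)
Your proof is correct and is essentially the paper's argument. The paper simply invokes the Poisson summation formula for finite abelian groups, i.e.\ $\sum_{\chi\in V^{\perp}}\chi(x)=\#V^{\perp}\cdot\mathbf{1}_V(x)$ for the annihilator $V^{\perp}$ of $V$ in the dual group. Your trace-form construction of $V_*$ and the orthogonality step are exactly that duality argument written out, and you also verify that the annihilator corresponds to an $\F_q$-subspace of size $q^{n-j}$, which the paper leaves implicit.

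One small aside: the relative trace is convenient but not forced, contrary to your closing remark. Since $V$ is $\F_q$-stable, the absolute-trace annihilator $\{u:\mathrm{Tr}(uv)=0\ \forall v\in V\}$ is automatically an $\F_q$-space. In fact it coincides with your $V_*$, by the same nondegeneracy of $\mathrm{Tr}_{q/p}$ that you use in the second step.
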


The next result provides a uniform bound for multiplicative character sums over affine subspaces.

\begin{lemma}\label{lem:boundaffine}
    If $V\subseteq \F_{q^n}$ is an $\F_q$-vector space and $\chi$ is a nontrivial multiplicative character of $\F_{q^n}$, then for every $y\in \F_{q^n}$ we have  
    $$\left |\sum_{v\in V}\chi(y+v)\right|\le q^{n/2}.$$
    In particular, if $q$ is odd and $\# V>q^{n/2}$, then $y+V$ always contains a nonzero square.
\end{lemma}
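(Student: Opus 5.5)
The plan is to write the sum over the affine subspace $y+V$ using additive characters. Then the problem reduces to Gauss sums. For $x\in\F_{q^n}$, Lemma~\ref{lem:indicator} applied to $V$ gives
$$\sum_{v\in V}\chi(y+v)=\sum_{x\in \F_{q^n}}\chi(x)\,\mathbf 1_{V}(x-y)=\frac{\#V}{q^n}\sum_{u\in V_*}\sum_{x\in\F_{q^n}}\chi(x)\psi_u(x-y),$$
where $\#V_*=q^n/\#V$.

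Next I split off the term $u=0$. It contributes $\frac{\#V}{q^n}\sum_x\chi(x)=0$, because $\chi$ is nontrivial. For $u\ne0$, the inner sum equals $\overline{\psi_u(y)}\,G(\chi,\psi_u)$, where $G(\chi,\psi_u)=\sum_{x}\chi(x)\psi_u(x)$ is a Gauss sum with both characters nontrivial. The classical evaluation gives $|G(\chi,\psi_u)|=q^{n/2}$. Applying the triangle inequality, the whole sum is at most
$$\frac{\#V}{q^n}\,(\#V_*-1)\,q^{n/2}<\frac{\#V}{q^n}\cdot\frac{q^n}{\#V}\,q^{n/2}=q^{n/2}.$$
This proves the bound. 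The only subtle point is normalizing the Poisson-summation identity correctly, that is, checking that Lemma~\ref{lem:indicator} gives the factor $1/\#V_*=\#V/q^n$. I expect no real obstacle beyond that.

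For the ``in particular'' statement, take $\chi=\eta$ and suppose $\#V>q^{n/2}$. The coset $y+V$ contains at most one zero, so if it had no nonzero square we would get
$$\sum_{v\in V}\eta(y+v)\le-(\#V-1),$$
and hence $\#V-1\le\left|\sum_{v\in V}\eta(y+v)\right|$. To turn this into a contradiction, I will use the strict inequality obtained above, namely $\left|\sum_{v}\eta(y+v)\right|\le (q^{n/2}/\#V_*)(\#V_*-1)$, which is $<q^{n/2}$.

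Then $\#V-1<q^{n/2}$. Both $\#V$ and $q^{n/2}$ are powers of $\sqrt q$, or $\#V$ is a power of $q$, so the inequalities $\#V>q^{n/2}$ and $\#V-1<q^{n/2}$ are incompatible. More concretely, $\#V\ge q^{n/2}\sqrt q$ gives $\#V-1>q^{n/2}$ since $q\ge3$. This contradiction shows that $y+V$ contains a nonzero square.
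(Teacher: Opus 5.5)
Your proof is correct, but for the main bound you take a different route from the paper. The paper does not expand the indicator of $V$. It applies Corollary~2 of Gyarmati--S\'ark\"ozy, a bilinear estimate of the shape $\bigl|\sum_{s\in X,\,t\in Y}\chi(s+t)\bigr|\le (q^n\,\#X\,\#Y)^{1/2}$ valid for arbitrary sets, with $X=y+V$ and $Y=V$. Because $V$ is additively closed, every element of $y+V$ arises exactly $\#V$ times as $s+t$, so the double sum equals $\#V\sum_{v\in V}\chi(y+v)$, and dividing by $\#V$ gives $q^{n/2}$. You instead Fourier-expand $\mathbf 1_V$ via Lemma~\ref{lem:indicator} and reduce to Gauss sums. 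This makes the argument self-contained given the classical evaluation $|G(\chi,\psi_u)|=q^{n/2}$, and it even yields the slightly sharper, strict bound $(1-\#V/q^n)\,q^{n/2}$. The paper's version is a one-line citation that uses the subspace structure only through additive closure, at the cost of relying on an external bilinear estimate. Your treatment of the ``in particular'' part is essentially the paper's. Note that the strict inequality is not needed there: the non-strict bound $\#V-1\le q^{n/2}$ already contradicts $\#V\ge q^{n/2}\sqrt q$ when $q\ge 3$.
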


\begin{proof}
The first statement is a straightforward application of Corollary~2 in~\cite{GS} by taking $X=y+V$ and $Y=V$. For the second statement, if $y+V$ does not contain a nonzero square, it contains at least $\# V-1$ nonsquares. If $\eta$ denotes de quadratic character we obtain 
$\sum_{v\in V}\eta(y+v)\le 1-\# V$, hence 
$\# V-1\le q^{n/2}$. However, since $\# V>q^{n/2}$ and $V$ is an $\F_q$-vector subspace, we have $\# V\ge q^{n/2+1/2}$. The latter yields $q^{n/2}+1\ge q^{n/2+1/2}$, a contradiction with $q\ge 3$.
\end{proof}

The following lemma is a straightforward application of Theorem 4 in~\cite{GS}.

\begin{lemma}\label{lem:GS}
    Let $q$ be an odd prime power, fix $Q(X, Y)=X^2+bXY+Y^2\in \F_{q^n}[X, Y]$ with $b\ne 0$ and let $\psi$ be a nontrivial additive character of $\F_{q^n}$. Then for every nonempty sets $A, B\subseteq \F_{q^n}$, we have 
    $$\left|\sum_{a\in A, b\in B}\psi(Q(a, b))\right|\le (q^n\#A\#B)^{1/2}.$$
\end{lemma}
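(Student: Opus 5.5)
The plan is to prove the estimate directly with one application of the Cauchy--Schwarz inequality followed by completing the sum over the whole field. The bound has the standard shape of a bilinear-form estimate, so we do not need the general statement of Theorem~4 in~\cite{GS}. To avoid a clash with the coefficient $b$ of $Q$, write the sum as
$$S=\sum_{x\in A}\sum_{y\in B}\psi(x^2+bxy+y^2).$$
Since $\psi$ is an additive character, $\psi(x^2+bxy+y^2)=\psi(x^2)\,\psi(bxy+y^2)$. Moreover $|\psi(x^2)|=1$, so the factor depending only on $x$ disappears once we take absolute values of the inner sums.

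First apply Cauchy--Schwarz in the variable $x$:
$$|S|^2\le \#A\sum_{x\in A}\Bigl|\sum_{y\in B}\psi(bxy+y^2)\Bigr|^2\le \#A\sum_{x\in \F_{q^n}}\Bigl|\sum_{y\in B}\psi(bxy+y^2)\Bigr|^2 .$$
In the second step we enlarge the range of $x$ from $A$ to all of $\F_{q^n}$, which is legitimate because every term is nonnegative. Next expand the square as a double sum over $y,y'\in B$ and interchange summations. This gives
$$\sum_{x\in\F_{q^n}}\Bigl|\sum_{y\in B}\psi(bxy+y^2)\Bigr|^2=\sum_{y,y'\in B}\psi(y^2-y'^2)\sum_{x\in\F_{q^n}}\psi\bigl(bx(y-y')\bigr).$$

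The inner sum is the only point needing care. Write $\psi=\psi_t$ with $t\neq 0$, so that $x\mapsto \psi(bx(y-y'))$ is the additive character $\psi_{tb(y-y')}$. By orthogonality, its sum over $\F_{q^n}$ equals $q^n$ if $tb(y-y')=0$ and $0$ otherwise. Since $t\ne 0$ and $b\neq 0$, this forces $y=y'$. This is exactly where the hypothesis $b\neq0$ is used; without it the argument collapses.

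Only the diagonal terms $y=y'$ survive, and each contributes $\psi(0)\,q^n=q^n$. Hence the double sum equals $q^n\#B$, so $|S|^2\le q^n\#A\#B$. Taking square roots gives the claimed bound. I expect no real obstacle here; the only things to verify are the cancellation of the $x^2$ term and the orthogonality step.
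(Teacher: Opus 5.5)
Your proof is correct, but it takes a different route from the paper. The paper gives no argument for this lemma: it quotes Theorem~4 in~\cite{GS} and specializes it to $Q_b$. You instead prove exactly the needed case from scratch. The factor $\psi(x^2)$ has modulus $1$. Cauchy--Schwarz in $x$, followed by completing $x$ to all of $\F_{q^n}$, reduces everything to the complete linear sums $\sum_{x}\psi(bx(y-y'))$. Orthogonality then removes every off-diagonal pair, because $b\neq 0$.

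The citation buys brevity and a more general result in the background. Your argument buys self-containment and shows exactly which hypotheses are used. In particular, you never use that $q$ is odd, so the argument works verbatim in characteristic $2$. The proof of Theorem~\ref{thm:main}(iv) uses nothing else that depends on the characteristic, so your route also gives $\omega(Q_b,V)\le q^{n/2}+2$ for even $q$.

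This refines the paper's closing subsection, which says the hypotheses of Theorem~4 in~\cite{GS} can fail for $Q_b$ when $q$ is even and that the results for $Q_b$ no longer hold in characteristic $2$. That concern is genuine for the Weil step in the proof of Theorem~\ref{thm:diam}, where degree $2$ becomes divisible by the characteristic. It does not apply to this lemma.
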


Finally, we also need Weil's bound for additive character sums.

\begin{lemma}\label{lem:weil}
    Let $f\in \F_q[X]$ be a polynomial of degree $d\ge 1$ such that $\gcd(d, q)=1$. Then for every nontrivial additive character $\psi$ of $\F_q$ we have
$$\left|\sum_{x\in \F_q}\chi(f(x))\right|\le (d-1)q^{1/2}.$$
    
\end{lemma}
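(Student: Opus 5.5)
This is the classical Weil bound for additive character sums, so I would derive it from the Weil conjectures for curves rather than reprove it. (As printed, the sum is written with $\chi$; it should read $\psi$, the additive character in the hypothesis.) Let $q=p^m$. Every nontrivial additive character of $\F_q$ has the form $\psi_a(x)=\psi_1(ax)$ with $a\ne 0$. Replacing $f$ by $af$ keeps the degree $d$, so it suffices to treat the canonical character $\psi_1$.

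If $d=1$, then $f(x)=\alpha x+\beta$ with $\alpha\neq 0$. As $x$ runs over $\F_q$, so does $f(x)$, and orthogonality of a nontrivial character gives $\sum_{x}\psi(f(x))=0=(d-1)q^{1/2}$. So assume $d\ge 2$.

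Next we reduce to the case where $f$ is not of the form $g^p-g+c$. If $f=g^p-g+c$, then $\mathrm{Tr}(g(x)^p-g(x))=0$, so each term of the sum is constant. However, $\gcd(d,p)=1$ rules this out. If $\deg g=e\ge 1$ then $\deg f=pe$, and if $g$ is constant then $f$ is constant; neither is compatible with $p\nmid d\ge1$.

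Now consider the Artin--Schreier curve $C: Y^p-Y=f(X)$ over $\F_q$. Since $p\nmid d$, it is absolutely irreducible. Its smooth projective model has a single point at infinity, which is totally ramified, and the Riemann--Hurwitz formula gives genus $g(C)=(p-1)(d-1)/2$.

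The next step is counting points. For each $x\in\F_q$, the number of $y\in\F_q$ with $y^p-y=f(x)$ is $p$ if $\mathrm{Tr}(f(x))=0$ and $0$ otherwise. This count equals $\sum_{c\in\F_p}\psi_1(cf(x))$. Hence
$$\#C(\F_q)-1-q=\sum_{c\in\F_p^*}\sum_{x\in\F_q}\psi_1(cf(x)).$$

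To isolate one term, I would use the decomposition of the $L$-function of $C$ into the $p-1$ characters of $\mathrm{Gal}(C/\mathbb{P}^1)\cong\F_p$. The numerator of the zeta function factors as $\prod_{c\ne0}L(T,\psi_c,f)$, where each $L(T,\psi_c,f)=\exp\bigl(\sum_k S_k(c)T^k/k\bigr)$ is a polynomial of degree $d-1$. Here
$$S_k(c)=\sum_{x\in\F_{q^k}}\psi_1\bigl(c\,\mathrm{Tr}_{\F_{q^k}/\F_q}(f(x))\bigr).$$
That $L(T,\psi_c,f)$ is a polynomial of exact degree $d-1$ is a standard Grothendieck--Ogg--Shafarevich (or elementary Swan-conductor) computation. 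By the Riemann hypothesis for $C$ (Weil), the reciprocal roots $\omega_1,\dots,\omega_{d-1}$ of $L(T,\psi_1,f)$ all have absolute value $q^{1/2}$. Since $S_1(1)=-\sum_i\omega_i$, we get $|S_1(1)|\le (d-1)q^{1/2}$.

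The main obstacle is the rationality step: showing that the $L$-function is a polynomial of degree exactly $d-1$. It does not follow formally from the point count. I would cite it as standard (for example Chapter 5 of~\cite{LN}), alongside Weil's theorem, rather than reprove it; the lemma is quoted as a known result in any case.
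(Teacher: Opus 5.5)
Your argument is correct, and so is your fix of the typo ($\chi$ should be $\psi$). The paper gives no proof of this lemma. It simply quotes the classical Weil bound as a black box, the version stated in Chapter 5 of Lidl--Niederreiter.

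Your sketch therefore goes further than the paper. It reduces the statement to two inputs: the Riemann hypothesis for the Artin--Schreier curve $Y^p-Y=f(X)$, and the polynomiality of $L(T,\psi_c,f)$. This is the standard textbook derivation of the result the paper cites. The reduction to $\psi_1$, the case $d=1$, the genus $(p-1)(d-1)/2$, the point count, the factorization $P_C(T)=\prod_{c\ne 0}L(T,\psi_c,f)$, and the identity $S_1(1)=-\sum_i\omega_i$ are all fine.

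One remark lightens what you call the main obstacle. You only need $\deg L(T,\psi_c,f)\le d-1$. Exactness then follows automatically from
\[
\sum_{c\ne 0}\deg L(T,\psi_c,f)=2g(C)=(p-1)(d-1).
\]
This upper bound is elementary. Use the standard identity $L(T,\psi_1,f)=\sum_h \lambda(h)T^{\deg h}$, summed over monic $h\in\F_q[X]$. Here $\lambda(h)=\psi_1\bigl(\sum_\alpha f(\alpha)\bigr)$, with the inner sum over the roots of $h$ counted with multiplicity. For $\deg h=k\ge d$, Newton's identities show that the coefficient $a_d$ of $h$ enters $\lambda(h)$ only through the factor $\psi_1(-d\,c_d\,a_d)$, where $c_d$ is the leading coefficient of $f$. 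Since $p\nmid d$, summing over $a_d\in\F_q$ kills the coefficient of $T^k$ for every $k\ge d$.

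Also, your paragraph about $f=g^p-g+c$ is not really needed. Absolute irreducibility of $Y^p-Y-f(X)$ follows directly from the pole order $d$ of $f$ at infinity being prime to $p$, which you then invoke anyway.
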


\section{The graphs $\G(Q_{\pm}, V)$}
In this section we provide the structure of the graphs arising from the quadratic forms $Q=Q_{\pm}$. For the quadratic form $Q_+$, we have the following result.

\begin{proposition}\label{prop:Q+}
    Let $V\subsetneq \F_{q^n}$ be a proper $\F_q$-vector subspace and let $S_V$ be the set of squares in $V$. If $S_V=\{0\}$, then $\omega(Q_+, V)=2$ and $\G(Q_+, V)$ is triangle-free. Otherwise, the set $C_V:=\{u\in \F_{q^n}: u^2\in S_V\}$ induces a maximal clique in $\Gamma(Q_+, V)$ of size at least $3$, and every other maximal clique of this graph has size $2$. In particular, $\Gamma(Q_+, V)$ is disconnected.
\end{proposition}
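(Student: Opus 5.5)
The proof rests on a structural observation about triangles in $\G(Q_+,V)$. Suppose $x,y,z$ are pairwise distinct and pairwise adjacent, so that $x^2+y^2$, $x^2+z^2$ and $y^2+z^2$ all lie in $V$. Since $V$ is closed under $\F_q$-linear combinations and $2$ is invertible (as $q$ is odd), we get
\[
2x^2=(x^2+y^2)+(x^2+z^2)-(y^2+z^2)\in V,
\]
so $x^2\in V$. By symmetry $y^2,z^2\in V$ as well. Hence every vertex of a triangle lies in $C_V=\{u: u^2\in S_V\}$.

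Conversely, $C_V$ is a clique: if $u,w\in C_V$ are distinct, then $u^2+w^2\in V$ because $V$ is additively closed. I would then split into two cases.

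\emph{Case $S_V=\{0\}$.} Here $C_V=\{0\}$, so no triangle exists and $\omega(Q_+,V)\le 2$. To get equality I must exhibit one edge. Since $V\ne\{0\}$ is implicit (for $V=\{0\}$, the pair $x=1$, $y=i$ with $i^2=-1$ exists in $\F_{q^n}$ because $n\ge2$, so $-1$ is a square there), it suffices to find $x\ne y$ with $x^2+y^2\in V$. Taking $y=ix$ with $i^2=-1$ gives $x^2+y^2=0\in V$, and $x\ne ix$ for $x\ne0$. So an edge exists and $\omega(Q_+,V)=2$.

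\emph{Case $S_V\ne\{0\}$.} Pick a nonzero square $s=u^2\in V$. Then $\pm u$ and $0$ all lie in $C_V$, so $\#C_V\ge 3$. Now consider any maximal clique $K$ with $\#K\ge3$. Every vertex of $K$ lies in some triangle inside $K$, hence in $C_V$ by the observation above. So $K\subseteq C_V$, and by maximality $K=C_V$; in particular $C_V$ is itself a maximal clique. Every other maximal clique therefore has size at most $2$. It has size exactly $2$ because the graph has no isolated vertex: each $x\ne0$ is adjacent to $ix$, and $0$ is adjacent to a nonzero element of $C_V$ in this case (and to $i\cdot 0$ fails, but this is not needed).

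For disconnectedness, the key step is to show that $C_V$ is a union of components. If $u\in C_V$ and $(u,w)$ is an edge, then $u^2+w^2\in V$ and $u^2\in V$, so $w^2\in V$ and hence $w\in C_V$. Thus $C_V$ is closed under adjacency. It is a proper subset because $\#C_V\le 2\#V-1<q^n$ when $V$ is proper (indeed $\#V\le q^{n-1}$). So the graph is disconnected whenever $C_V\ne\emptyset$, and $C_V$ always contains $0$.

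The only delicate points are the isolated-vertex and edge-existence remarks, which are handled by the element $i$. That $-1$ is a square in $\F_{q^n}$ for $n\ge2$ holds because $q^2\equiv1\pmod 4$ when $n$ is even; I would double-check the odd-$n$ case, where $-1$ need not be a square. If it fails there, I would instead show maximal cliques have size $2$ via neighbours $y$ with $x^2+y^2\in V$, which exist since $\#V\ge q$ affords solutions in $y^2\in -x^2+V$.
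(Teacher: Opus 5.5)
Your core argument is the paper's own. A triangle $x_1,x_2,x_3$ forces $2x_1^2\in V$, so every clique of size at least $3$ lies in $C_V$. Moreover, $C_V$ is a clique that is closed under adjacency and has at most $2\#V-1<q^n$ elements. This correctly gives triangle-freeness when $S_V=\{0\}$, maximality of $C_V$, that every other maximal clique has size \emph{at most} $2$, and disconnectedness.

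The gap is in the extra steps meant to upgrade ``at most $2$'' to ``exactly $2$'': the existence of an edge when $S_V=\{0\}$, and the absence of isolated vertices when $S_V\ne\{0\}$. Both use some $i\in\F_{q^n}$ with $i^2=-1$, and $n\ge 2$ does not provide it. For odd $n$, $-1$ is a square in $\F_{q^n}$ if and only if $q\equiv 1\pmod 4$. Your fallback, that $\#V\ge q$ forces $-x^2+V$ to contain a square, is false.

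In fact, when $q^n\equiv 3\pmod 4$ the ``exactly $2$'' claims are false in general, so no argument can close this gap.
\begin{enumerate}[(a)]
\item For $V=\{0\}$ (which is a proper subspace, so not excluded) the graph has no edges at all, hence $\omega(Q_+,V)=1$.
\item For an example with $S_V\ne\{0\}$, take $q=n=3$, $V=\F_3$, and $\F_{27}=\F_3(t)$ with $t^3=t+1$. Here $S_V=\{0,1\}$ and $C_V=\{0,\pm1\}$ is a triangle. The element $x=1-t-t^2$ satisfies $x^2=t$, so its neighbours would be the $y$ with $y^2\in\{-t,\,1-t,\,-1-t\}$. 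On $\F_{27}$ the quadratic character is $z^{13}=N_{\F_{27}/\F_3}(z)$. Also $N(a-t)=m(a)$ for $m(X)=X^3-X-1$. So these three elements have norm $m(0)=m(1)=m(-1)=-1$ and are nonsquares. Hence $x$ is isolated, and $\{x\}$ is a maximal clique of size $1$.
\end{enumerate}

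The paper's proof silently establishes only triangle-freeness and ``at most $2$''. Part of your extra argument can be salvaged. If $V\ne\{0\}$ and $S_V=\{0\}$, Lemma~\ref{lem:square} forces $n$ to be even, so $i$ exists and $(x,ix)$ is an edge. Whenever $q^n\equiv 1\pmod 4$, your isolated-vertex argument is valid, so ``exactly $2$'' holds there. Otherwise the statement should read ``at most $2$'' (and $\omega(Q_+,V)\le 2$ when $S_V=\{0\}$).
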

\begin{proof}
For the first statement, suppose that $\omega(Q_+, V)\ge 3$ and let $x_1, x_2, x_3\in \F_{q^n}$ be elements inducing a triangle in $\G(Q_+, V)$ with $x_1\ne 0$. Hence $x_1^2+x_2^2, x_1^2+x_3^2, x_2^2+x_3^2\in V$. Since $V$ is a vector subspace, it follows that
$$2x_1^2=(x_1^2+x_2^2)+(x_1^2+x_3^2)-(x_2^2+x_3^2)\in V.$$
Since $q$ is odd, we obtain $x_1^2\in V$ and then $S_V\ne \{0\}$.
For the second statement, suppose $S_V\ne \{0\}$ and let $u^2\in S_V$ with $u\ne 0$. It is direct to verify that $C_V$ induces a clique containing the vertices $u, -u$ and $0$. On the other hand, from the proof of the first statement, the vertices of every clique of size at least $3$ must lie in $C_V$. This completes the proof of the second statement. 

For the last statement, observe that if $(x, v)$ is an edge of $\G(Q_+, V)$
 with $v\in C_V$, then $x^2+v^2\in V$. Since $v^2\in V$, we obtain $x^2\in V$ and so $x\in C_V$. Hence $C_V$ induces a maximal connected component of $\G(Q_+, V)$. It is clear that $C_V$ has size at most $2\# V$. Therefore, if $\G(Q_+, V)$ is connected, then $2\# V\ge q^n$ and, since $q$ is odd, the latter is equivalent to $\# V=q^n$, a contradiction with $V\ne \F_{q^n}$.
\end{proof}

In the following lemma we provide a simple description of the connected components of $\Gamma(Q_-, V)$.

\begin{lemma}\label{lem:Q-}
Let $V\subseteq \F_{q^n}$ be a proper $\F_q$-vector subspace  and, for each $y\in \F_{q^n}$, set 
$$A_y:=\{u\in \F_{q^n}: u^2\in y^2+V\}.$$ 
Then each $A_y$ induces a maximal connected component of $\Gamma(Q_-, V)$, which is also a (maximal) clique. In particular, $\Gamma(Q_-, V)$ is disconnected.
\end{lemma}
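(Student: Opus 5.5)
The plan is to show that the relation ``$u^2-w^2\in V$'' is an equivalence relation on $\F_{q^n}$, that its classes are exactly the sets $A_y$, and that adjacency in $\Gamma(Q_-,V)$ is this relation restricted to distinct vertices. First note that $\Gamma(Q_-,V)$ is undirected, since $Q_-(y,x)=-Q_-(x,y)$ and $V$ is closed under negation. Define $u\sim w$ iff $u^2-w^2\in V$, i.e.\ $u^2+V=w^2+V$ in the quotient group $\F_{q^n}/V$. This is clearly an equivalence relation, being the pullback of equality under the map $u\mapsto u^2+V$. Moreover, $u\in A_y$ iff $u^2\in y^2+V$ iff $u^2+V=y^2+V$, so $A_y$ is precisely the equivalence class of $y$. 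In particular the $A_y$ partition $\F_{q^n}$, and any two of them are either equal or disjoint.

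Next I will check the clique and component properties. For distinct $u,w\in A_y$ we have $u^2-w^2=(u^2-y^2)-(w^2-y^2)\in V$, so $(u,w)$ is an edge and $A_y$ induces a clique. Conversely, if $(x,w)$ is an edge with $w\in A_y$, then $x^2=w^2+(x^2-w^2)\in y^2+V$, so $x\in A_y$; hence no edge leaves $A_y$. Therefore $A_y$ is a connected component, since it is connected (being a clique) and closed under adjacency. It is also a maximal clique, since any clique containing a vertex of $A_y$ is contained in that component.

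For disconnectedness, it suffices to show $A_y\neq\F_{q^n}$. Since $u\mapsto u^2$ is at most $2$-to-$1$, we get $\#A_y\le 2\#(y^2+V)=2\#V\le 2q^{n-1}<q^n$, using that $V$ is proper and $q\ge 3$. I do not expect a real obstacle here; the only points needing care are that $V$ is an additive group, which makes $\sim$ transitive, and the counting bound $\#A_y\le 2\#V$ together with $q$ odd for the final strict inequality.
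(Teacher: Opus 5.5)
Your proof is correct and follows essentially the same route as the paper: the equivalence relation $x\sim y$ if and only if $x^2-y^2\in V$, whose classes are exactly the sets $A_y$, each class being a clique closed under adjacency, with disconnectedness coming from $\#A_y\le 2\#V<q^n$. The only differences are that you check explicitly that no edge leaves $A_y$, and you derive the strict inequality $2\#V\le 2q^{n-1}<q^n$ directly rather than by reference to the argument in Proposition~\ref{prop:Q+}.
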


\begin{proof}
For $x, y\in \F_{q^n}$, write $x\sim y$ if $x^2-y^2\in V$. We clearly have $x\sim x$ and, if $x\sim y$, then $y\sim x$. Moreover, if $x\sim y$ and $y\sim z$, then 
$x^2-y^2, y^2-z^2\in V$. Since $V$ is a subspace, we obtain
$$x^2-z^2=(x^2-y^2)+(y^2-z^2)\in V,$$
hence $x\sim z$. In other words, $\sim$ is an equivalence relation. Moreover, the equivalence classes are precisely the sets $A_y, y\in \F_{q^n}$. 
Therefore, each connected component of $\Gamma(Q_-, V)$ is a clique comprising the subgraph induced by $A_y$ for some $y$. In particular, each connected component has size at most $\# A_y\le 2\# V$. As in the proof of Proposition~\ref{prop:Q+}, we conclude that $\Gamma(Q_-, V)$ is disconnected.  
\end{proof}

\begin{remark}\label{rem:Q-}
 We observe that, for $y, z\in \F_{q^n}$, we have $A_y=A_z$ if and only if $y^2+V=z^2+V$. As there are exactly $\frac{q^n}{\# V}$ distinct affine subspaces of the form $c+V$, we conclude that $\Gamma(Q_-, V)$ has at most $\frac{q^n}{\# V}$ connected components with equality if and only if every affine subspace $c+V$ is of the form $y^2+V$. The latter holds if and only if $c+V$ contains a square for every $c\in \F_{q^n}$: thanks to Lemma~\ref{lem:boundaffine}, this is true when $\# V> q^{n/2}$.
\end{remark}

\section{Proof of the results}
Here we provide the proofs of Theorems~\ref{thm:first} and~\ref{thm:main}. We divide them into subsections.

\subsection{Proof of Theorem~\ref{thm:first}}
It is direct to verify that the quadratic forms $Q$ listed in Theorem~\ref{thm:first} are such that $\Gamma(Q, V)$ is undirected for every subspace $V$. Conversely, assume that the latter holds and fix $u\in\F_{q^n}$ with $u\ne 0$. For each $v\in \F_{q^n}$ with $v\ne u$, let $U=Q(u, v)\F_q$ be the vector subspace generated by $Q(u, v)$. It is clear that $u\to v$ is a directed edge in $\Gamma(Q, U)$, hence $v\to u$ is also a directed edge in this graph, that is, $Q(v, u)\in U$. In other words, there exists $\lambda_v\in \F_{q}$ such that $Q(u, v)=\lambda_v Q(v, u)$. Since there are $q^{n}-1$ choices of $v$ and $q$ choices of $\lambda_v$, by the Pigeonhole Principle, there exists $\lambda_* \in \F_{q}$ such that the equation
$$Q(u, Y)=\lambda_* Q(Y, u),$$
has at least $\left\lceil\frac{q^n-1}{q}\right\rceil=q^{n-1}$ solutions. Write $Q(X, Y)=aX^2+bXY+cY^2$, hence 
$$Q(u, Y)-\lambda_* Q(Y, u)=(c-\lambda_* a)Y^2+bu(1-\lambda_*)Y+u^2(a-\lambda_* c).$$
The latter is a polynomial in $\F_q[Y]$ of degree at most $2$, hence it has at most $2$ solutions unless it vanishes.
Since $q$ is odd and $n\ge 2$, we obtain $q^{n-1}\ge 3>2$ and so $Q(u, Y)-\lambda_*Q(Y, u)=0$. Since $u\ne 0$, we obtain the following linear system of equations
$$\begin{cases}
    c&=\lambda_* a\\ b&=\lambda_*b\\  a&=\lambda_* c.
\end{cases}$$

The case $\lambda_*=0$ yields $a=b=c=0$, a contradiction. If $\lambda_*=1$, we obtain $a=c$. If $\lambda_*\ne 0, 1$, we obtain $b=0$ and $0\ne c=\lambda^2 c$, hence $\lambda_*=-1$. In this case, $(a, b, c)=(-c, 0, c)$ with $c\ne 0$. This completes the proof. 

\subsection{Proof of Theorem~\ref{thm:main}}
We split the proof into parts. 

\begin{enumerate}[(i)]
    \item Item (i) follows directly by Proposition~\ref{prop:Q+}, Lemma~\ref{lem:Q-} and Remark~\ref{rem:Q-}. 

\item For item (ii), Proposition~\ref{prop:Q+} shows that $\omega(Q_+, V)=2$ if $V$ does not contain a square distinct from $0$ and, otherwise, $\omega(Q_+, V)$ equals the number of $y\in \F_{q^n}$ such that $y^2\in V$ which is, by definition, $N(0, V)$.
Moreover, if $n$ is odd, Lemma~\ref{lem:square} entails that $N(0, V)=\# V>1$, completing the proof.

\item Similarly to item (ii), we prove item (iii) (by employing Lemma~\ref{lem:Q-} in this case).

\item For item (iv), fix $b\in \F_{q^n}^*$, set $M=\omega(Q_b, V)$ and let $S\subseteq \F_{q^n}$ be a set with $M$ vertices inducing a clique in $\G(Q_b, V)$. In particular, $Q(s, t)\in V$ for every $s, t\in S$ with $s\ne t$. As $V\ne \F_{q^n}$, Remark~\ref{rem:rest} guarantees the existence of a nontrivial additive character $\psi_V$ of $\F_{q^n}$ such that $\psi_V(v)=1$ for every $v\in V$. Therefore, $\psi_V(Q_b(s, t))=1$ for every $s, t\in S$ with $s\ne t$. Applying Triangle's Inequality and Lemma~\ref{lem:GS} with $A=B=S$, we obtain 
\begin{align*}M(M-1)=\left|\sum_{s, t\in S\atop s\ne t}\psi_{V}(Q_b(s, t))\right| &\le \left|\sum_{s, t\in S}\psi_{V}(Q_b(s, t))\right|+\sum_{s\in S}|\psi_{V}(Q_b(s, s))|\\ &\le (q^nM^2)^{1/2}+M.\end{align*}
Thus $M\le q^{n/2}+2$, completing the proof.

\item For the last statement, we first treat the pairs $(Q_+, V)$ where $V$ does not contain squares apart from $0$. In this case, item (ii) entails that $\omega(Q_+, V)=2$. However, thanks to Lemma~\ref{lem:boundaffine}, we have $\# V\le q^{n/2}$ and the inequality $|\omega(Q_+, V)-\# V|\le q^{n/2}$ trivially holds. For the remaining cases, items (ii) and (iii) entail that, for $Q=Q_{\pm}$, there exists $u\in \F_{q^n}$ such that $\omega(Q, V)$ equals the number of elements $y\in \F_{q^n}$ with $y^2\in u+V$. Now observe that, if $\eta$ denotes the quadratic character of $\F_{q^n}$, then for every $a\in \F_{q^n}$ the equation $x^2=a$ has $\eta(a)+1$ solutions in $\F_{q^n}$. Hence 
$$\omega(Q, V)=\sum_{v\in V}(1+\eta(u+v))=\# V+\sigma$$
where $\sigma=\sum_{v\in V}\eta(v+u)$. From Lemma~\ref{lem:boundaffine}, $|\sigma|\le q^{n/2}$ and the result follows.
\end{enumerate}

\subsection{Proof of Theorem~\ref{thm:diam}}
Since $V\ne \F_{q^n}$, it is clear that $\Gamma(Q_b, V)$ is not the complete graph and so its diameter is at least $2$. Set $\# V=q^{n-j}$ with $j\le n/4$ and let $V_*$ be as in Lemma~\ref{lem:indicator}. Suppose, by contradiction, that $\G(Q_b, V)$ has diameter greater than $2$. In particular, there exists $x, y\in \F_{q^n}$ with $x\ne y$ such that, for every $z\in \F_{q^n}$, at least one of the elements in the set $\{Q_b(x, z), Q_b(y, z)\}$ is not in $V$. From Lemma~\ref{lem:indicator} we obtain
$$0=\Delta:=\sum_{z\in \F_{q^n}}\left(\sum_{u\in V_*}\psi_u(Q_b(x, z))\right)\left(\sum_{v\in V_*}\psi_v(Q_b(y, z))\right).$$
If, for each pair $(u, v)$ with $u, v\in V_*$, we write \begin{align*}\sigma_{u, v}&=\sum_{z\in \F_{q^n}}\psi_{u}(Q_b(x, z))\psi_v(Q_b(y, z))\\&= \sum_{z\in \F_{q^n}}\psi_1((u+v)z^2+bz(ux+vy)+ux^2+vy^2),\end{align*}
we obtain 
$\Delta=\sum_{u, v\in V_*}\sigma_{u, v}$. We clearly have $\sigma_{0, 0}=q^n$. Since $b\ne 0$, the polynomial in $z$ appearing in the expression for $\sigma_{u, v}$ has degree $d_{u, v}\in \{1, 2\}$ if $(u, v)\ne (0, 0)$. Since $q$ is odd, we obtain $\gcd(d_{u, v}, q)=1$ and so we can apply Weil's bound. Lemma~\ref{lem:weil} implies that $|\sigma_{u, v}|\le q^{n/2}$ if $(u, v)\ne (0, 0)$. From Triangle's Inequality and the equality $\# V_*=q^j$, we obtain
$$0=|\Delta|\ge |\sigma_{0, 0}|-\sum_{u, v\in V_*\atop (u, v)\ne (0, 0)}|\sigma_{u, v}|\ge q^n-(q^{2j}-1)q^{n/2}.$$
Therefore, $q^{2j}>q^{n/2}$, a contradiction with $j\le n/4$. This concludes the proof.

\section{Conclusions and open problems}
In this paper we discussed the connectedness and clique number of graphs arising from vector spaces $V$ over finite fields through the quadratic forms $X^2\pm Y^2$ and $Q_b(X, Y)=Y^2+bXY+Y^2, b\ne 0$. Our study revealed a contrast between these quadratic forms regarding both connectedness and clique number. Of course there could be many other interesting properties to explore in these graphs.

\begin{problem}
    Discuss other relevant combinatorial issues on the graphs $\Gamma(Q, V)$.  
\end{problem}

Theorem~\ref{thm:main} generally provides a closed formula for the clique number of $\Gamma(X^2\pm Y^2, V)$ via the number of squares in $V$ and its translates. The latter essentially requires the computation of the character sum $\left|\sum_{v\in V}\eta(u+v)\right|$ along the affine subspaces $u+V$, where $\eta$ is the quadratic character. In particular, the estimate in Eq.~\eqref{eq:near} follows from the classical fact that this character sum does not exceed $q^{n/2}$. However, the bound becomes trivial in the range $\# V<q^{n/2}$. In~\cite{reis1} the author breaks the $n/2$-barrier under reasonably generic choices of $V$ and $n$, hence some nontrivial bounds for the clique number $\omega(X^2\pm Y^2, V)$ in the range $\# V<q^{n/2}$ can be derived. Nevertheless, improving estimates for character sums along affine subspaces is by itself a hard open problem.

Moving to the quadratic forms $Q_b$, Theorem~\ref{thm:main} provides the uniform bound $\omega(Q_b, V)\le q^{n/2}+2$, which does not improve the trivial bound $\le 2\# V$ if $\# V<q^{n/2}$. On the other hand, following Remark~\ref{rem:optimal}, if $V$ is a subfield of $\F_{q^n}$ and $b\in V$, we easily obtain the lower bound $\omega(Q_b, V)\ge \# V$. We propose the following problem.

\begin{problem}
Let $V\subseteq \F_{q^n}$ be a proper $\F_q$-vector space such that $\# V<q^{n/2}$ and, for every $\alpha\in \F_{q^n}$,  $\alpha V$ is not a subfield of $\F_{q^n}$.
\begin{enumerate}
    \item Provide nontrivial estimates for $\omega(Q_b, V)$.
    \item Prove or disprove: $\omega(Q_b, V)=o(\# V)$ as $q\to +\infty$.
\end{enumerate}
\end{problem}
Finally, in light of Theorem~\ref{thm:diam}, we also propose further discussion on connectedness of the graphs $\G(Q_b, V)$.

\begin{problem}
For each integer $n\ge 4$, find the smallest integer $s(n)$ with the following property: if $q$ is large and $\# V\ge q^{s(n)}$, then $\Gamma(Q_b, V)$ is connected for every $b\in \F_{q^n}^*$.
\end{problem}

Theorem~\ref{thm:diam} and Remark~\ref{rem:disc} provide the bounds $s(n)\le 3n/4$ and, for $n$ even, $s(n)>n/2$. In particular, we obtain $s(4)=3$, $s(6)\in \{4, 5\}$, $s(8)\in \{5, 6\}$ and $s(10)\in \{6, 7\}$. 
\subsection{Even characteristic}
We end the paper with some comments on the study of the graphs $\G(Q, V)$ in fields of characteristic $2$. Theorem~\ref{thm:first} also holds if $q$ is even. In~\cite{reis2} the authors also considers the graphs emerging from $Q(X, Y)=XY$ when $q$ is even, and it is evident its contrast to the case where $q$ is odd. Now, for $Q(X, Y)=X^2\pm Y^2=X^2+Y^2=(X+Y)^2$, we see that if $\tilde(V)=\{u\in \F_{q^n}: u^2\in V\}$, then $\tilde{V}$ is a vector subspace of the same size of $V$ and $\Gamma(Q, V)$ coincides with the graph induced by the relation $x+y\in \tilde{V}$. The latter is quite trivial: the graph comprises $\frac{q^n}{\# V}$ connected components, each being a clique of size $\# V$ representing the cosets $u+\tilde{V}$ of the quotient $\F_{q^n}/\tilde{V}$ of additive groups. Finally, our results for $Q_b$ no longer hold in characteristic $2$. In fact, the conditions in Lemma~\ref{lem:weil} and Theorem~4 in~\cite{GS} (which implies Lemma~\ref{lem:GS}) can be easily violated for suitable choices of $Q_b$ if $q$ is even. 

\begin{problem}
    Study the graphs $\G(Q_b, V)$ for fields of even characteristic and compare the findings with the results from the odd characteristic case.
\end{problem}

\section*{Acknowledgements}
 L. Reis was supported by CNPq Grant 310583/2025-0.


\end{document}